\documentclass[11pt,letterpaper,reqno]{amsart}
\usepackage{graphicx}
\usepackage{amsmath}
\usepackage{amssymb}
\usepackage{amsfonts}
\usepackage{amsrefs}
\usepackage{enumerate}
\usepackage{cancel}
\usepackage[mathscr]{eucal}

\usepackage[normalem]{ulem}

\usepackage[usenames]{color}

\setlength{\hoffset}{-0.8in} \addtolength{\textwidth}{1.60in}
\setlength{\voffset}{-0.250in} \addtolength{\textheight}{0.5in}

\newcommand{\R}{\mathbb{R}}

\newcommand{\I}{{\rm{\mathbf{I}}}}
\newcommand{\II}{{\rm{\mathbf{II}}}}

\newtheorem{thm}{Theorem}
\newtheorem{lemma}[thm]{Lemma}
\newtheorem{prop}[thm]{Proposition}
\newtheorem{cor}[thm]{Corollary}
\newtheorem{conj}[thm]{Conjecture}

\theoremstyle{definition}

\theoremstyle{remark}
\newtheorem{remark}{Remark}

\newtheorem*{outline}{Outline}

\DeclareMathOperator{\Hess}{Hess}
\DeclareMathOperator{\capac}{cap}

\DeclareMathOperator{\spt}{spt}

\begin{document}

\title{Scalar curvature and the relative capacity of geodesic balls}
\author{Jeffrey L. Jauregui}
\date{\today}

\begin{abstract}
In a Riemannian manifold, it is well known that the scalar curvature at a point can be recovered from the volumes (areas) of small geodesic balls (spheres). We show the scalar curvature is likewise determined by the relative capacities of concentric small geodesic balls. This result has motivation from general relativity (as a complement to a previous study by the author of the capacity of large balls in an asymptotically flat manifold) and from weak definitions of nonnegative scalar curvature. It also motivates a conjecture (inspired by the famous volume conjecture of Gray and Vanhecke), regarding whether Euclidean-like behavior of the relative capacity on the small scale is sufficient to characterize a space as flat.
\end{abstract}

\maketitle 

\section{Introduction}
Let $(M,g)$ be a Riemannian $n$-manifold, $n \geq 3$, and let $p \in M$. Let $V(r)$ and $A(r)$ be the volume and boundary hypersurface area of the geodesic ball of radius $r$ about $p$ with respect to $g$. The following expansions for small $r$ are well known (see \cite{GV} for instance):
\begin{align}
V(r) &= \beta_n r^n\left( 1 - \frac{S(p)}{6(n+2)}r^2 + O(r^4)\right),\label{V}\\
A(r) &= \omega_{n-1} r^{n-1}\left(1-\frac{S(p)}{6n}r^2 + O(r^4)\right),\label{A}
\end{align}
where $\beta_n$ and $\omega_{n-1}=n\beta_n$  are the volume and hypersurface boundary area of the unit $n$-ball in $\R^n$ and $S(p)$ is the scalar curvature at $p$. In particular, $S(p)$ can be detected from the first nontrivial, non-Euclidean term in the expansions for volume and area. 

In this paper we ask: can the scalar curvature be detected from the \emph{capacity} of small geodesic balls?
Since the capacity is a global concept (in contrast to volume and area), we immediately refine the question by localizing and considering instead the capacity of a ball of radius $R_1$ \emph{relative} to a concentric ball of radius $R_2> R_1$ with $R_2$ small. (Capacity and relative capacity are recalled below.) The less explicit nature of the definition of relative capacity and the dependence on two parameters makes this question more subtle for relative capacity than for volume or area.

One source of motivation for the above question lies in trying to understand scalar curvature without relying on regularity of the underlying metric. There has been considerable interest in weak definitions of scalar curvature, particularly lower bounds, with multiple approaches taken (for example, see \cites{Bur,CM, Gro,Lee} and the references therein). Since the definition of capacity does not use derivatives of the Riemannian metric, a new weak definition of nonnegative scalar curvature will be suggested by the main theorem --- we discuss this further immediately after Corollary \ref{cor_main}.

Another source of motivation, this one from general relativity, is the recent approach by the author to detecting the total mass of an asymptotically flat 3-manifold based on the capacity-volume relationship of large regions \cite{Jau}. This was inspired by Huisken's definition of isoperimetric mass \cite{Hui}.
Several known ``quasi-local mass'' quantities, including the Hawking mass, the Brown--York mass, and Huisken's isoperimetric mass detect the scalar curvature (which represents energy density) on the small scale and the total mass on the large scale (see \cite{FST}). Since capacity was used to study total mass on the large scale in \cite{Jau}, it is natural ask the complementary question regarding whether it also detects scalar curvature on the small scale.

Secondarily, we are motivated by extending the known connections between scalar curvature and capacity/harmonic functions (as we recall below, harmonic functions determine the capacity). We mention some of these here (but do not attempt to give a complete list).  Sets of zero capacity play a significant role in Schoen and Yau's study of domains in $S^n$ equipped with conformal metrics with scalar curvature bounds \cite{SY}.
 Bray  \cite{Bray_RPI}, Bray and Miao \cite{BM}, Schwartz \cite{Schw}, Freire and Schwartz \cite{FS}, and Mantoulidis, Miao, and Tam \cite{MMT} have proved inequalities for the boundary capacity of asymptotically flat manifolds that rely on nonnegative scalar curvature. Stern proved a formula relating scalar curvature to the level sets of harmonic functions \cite{Ste} that has found applications including a new proof of the positive mass theorem in dimension three, due to Bray, Kazaras, Khuri, and Stern \cite{BKKS}. Harmonic functions also play an important role in the behavior of scalar curvature under conformal transformations: when used as conformal factors, they preserve the pointwise sign of scalar curvature.

\medskip

Before stating the main result, we recall the definition of relative capacity. Let $\Omega$ be an open subset of a Riemannian $n$-manifold $(M,g)$, $n \geq 3$, and let $K \subset \Omega$ be a compact set. The \emph{relative capacity} of $K$ in $\Omega$ is defined as
\begin{equation}
\label{eqn_cap}
\capac_g(K; \Omega) = \inf_{\phi} \left\{\frac{1}{(n-2)\omega_{n-1}}  \int_{M} |\nabla \phi|^2 dV\; :\; \phi \text{ is Lipschitz}, \phi \equiv 0 \text{ on } K, \phi \equiv 1 \text{ on } M \setminus \Omega\right\},
\end{equation}
where the gradient norm and volume form are with respect to $g$. (If $\Omega$ has non-compact closure, it is also required that $\spt(1-\phi)$ is compact.) In the case $\Omega = M$, then the above is simply the \emph{capacity} of $K$. If $\partial K$ and $\partial \Omega$ are smooth and nonempty and $\Omega$ has compact closure, then there exists a minimizer, namely the unique $g$-harmonic function $u$ on the closure of $\Omega \setminus K$, with $u|_{\partial K} = 0$, $u|_{\partial \Omega}=1$. Thus,
$$\capac_g(K; \Omega) = \frac{1}{(n-2)\omega_{n-1}}  \int_{M} |\nabla u|^2 dV = \frac{1}{(n-2)\omega_{n-1}}  \int_{\partial K} \frac{\partial u}{\partial \nu} dA,$$
where $\nu$ is the $g$-unit normal to $\partial K$ pointing out of $K$. From this it is easy to verify that in Euclidean $n$-space, the relative capacity of concentric balls of radii $R_1 < R_2$ is given by:
\begin{equation}
\label{eqn_euc_cap}
c_n(R_1,R_2) := \frac{1}{(R_1)^{2-n} - (R_2)^{2-n}}.
\end{equation}
Taking the limit $R_2 \to \infty$ recovers the capacity of a ball of radius $R_1$, i.e., the value $(R_1)^{n-2}$. Alternatively, fixing $R_2>0$ and letting $R_1 \to 0$ gives a value asymptotic to $(R_1)^{n-2}$.  If $R_1 \nearrow R_2$, then $c_n(R_1,R_2)$ blows up to $+\infty$. Below we will fix the ratio $\frac{R_2}{R_1}$ as a constant, $\lambda$, and consider the single parameter $r=R_1$. Note $c_n(r,\lambda r)$ is then simply the polynomial $\frac{r^{n-2}}{1-\lambda^{2-n}}$.

\medskip

Our main result is an expansion for the relative capacity of small balls in a Riemannian manifold, giving the next nontrivial term after the leading Euclidean term and showing in particular it is determined by the scalar curvature.
$\;$\\
$\;$\\
$\;$\\
$\;$\\

\begin{thm}
\label{thm_main}
Let $(M,g)$ be a Riemannian $n$-manifold, $n \geq 3$. Fix $p \in M$ and a parameter $\lambda > 1$. If $\capac_g(r,\lambda r)$ denotes the  capacity of the geodesic ball of radius $r$ about $p$ relative to the geodesic ball of radius $\lambda r$ about $p$, then for $r>0$ small,
\begin{equation}
\label{eqn_main0}
\capac_g(r,\lambda r) =\begin{cases}
\frac{r}{1-\lambda^{-1}}\left(1- \frac{ S(p) }{18} \lambda r^2 + o(r^2)\right), & n=3\\
\frac{r^2}{1-\lambda^{-2}} \left(1-\frac{S(p)}{12}\frac{\log(\lambda)}{(1-\lambda^{-2})}r^2+o(r^2)\right), &n=4\\
\frac{r^{n-2}}{1-\lambda^{2-n}}\left(1-\frac{(n-2)S(p)}{6n(n-4)}\frac{1-\lambda^{4-n}}{1-\lambda^{2-n}}r^2 + o(r^2)\right), &n\geq 5.
\end{cases}
\end{equation}
This can be represented with the unified formula for all $n \geq 3$:
\begin{equation}
\label{eqn_main}
\capac_g(r,\lambda r) = c_n(r,\lambda r) \left(1-\frac{(n-2)S(p)}{6n|n-4|^*}  \cdot \frac{c_n(r,\lambda r)}{c_{n-2}(r,\lambda r)} + o(r^2)\right),
\end{equation}
where the $*$ indicates that the $|n-4|$ factor is omitted for $n=4$.
\end{thm}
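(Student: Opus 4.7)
My plan is to reduce the problem to first-order perturbation theory on a fixed annulus in $\R^n$ and to exploit the variational characterization of relative capacity so that only the leading Euclidean harmonic function is needed. I begin by introducing normal coordinates $(x^1,\ldots,x^n)$ at $p$, in which
\[
g_{ij}(x) = \delta_{ij} - \tfrac{1}{3} R_{ikjl}(p)\, x^k x^l + O(|x|^3),
\]
and where (for $r$ below the injectivity radius divided by $\lambda$) the geodesic balls of radii $r$ and $\lambda r$ coincide exactly with the coordinate balls $\{|x| \le r\}$ and $\{|x|\le\lambda r\}$. Rescaling by $x = ry$ identifies these with the fixed Euclidean balls of radii $1$ and $\lambda$, and the pulled-back metric takes the form $\tilde g(y) = \delta + r^2 h + O(r^3)$ on $\Omega_\lambda := \{1 \le |y|\le\lambda\}$, where $h_{ij}(y) = -\tfrac{1}{3} R_{ikjl}(p)\,y^k y^l$. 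Because the Dirichlet energy $E(u,g) := \int |\nabla_g u|^2\, dV_g$ picks up an overall factor of $r^{n-2}$ under this rescaling,
\[
\capac_g(r,\lambda r) = \frac{r^{n-2}}{(n-2)\omega_{n-1}}\,E(\tilde u, \tilde g),
\]
where $\tilde u$ is the $\tilde g$-harmonic function on $\Omega_\lambda$ with $\tilde u = 0$ on $\{|y|=1\}$ and $\tilde u = 1$ on $\{|y|=\lambda\}$.

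The central step is to replace $\tilde u$ by the Euclidean harmonic function $u_0(y) := (|y|^{2-n}-1)/(\lambda^{2-n}-1)$ in the energy at only a higher-order cost. Because $\tilde u$ minimizes $E(\,\cdot\,,\tilde g)$ among functions with the given boundary values and $u_0 - \tilde u$ vanishes on both components of $\partial\Omega_\lambda$, an integration by parts shows
\[
E(u_0,\tilde g) - E(\tilde u,\tilde g) = \int_{\Omega_\lambda} |\nabla(u_0-\tilde u)|^2_{\tilde g}\, dV_{\tilde g}.
\]
Applying standard elliptic estimates to $\Delta_{\tilde g} u_0 = (\Delta_{\tilde g} - \Delta_\delta) u_0 = O(r^2)$ in $L^2$, together with the zero Dirichlet condition on $\tilde u - u_0$, yields $\|\tilde u - u_0\|_{H^1(\Omega_\lambda)} = O(r^2)$, so the above discrepancy is $O(r^4) = o(r^2)$. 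It then suffices to expand $E(u_0,\tilde g)$ to order $r^2$.

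Next, I Taylor-expand $\tilde g^{ij}$ and $\sqrt{\det \tilde g}$ in powers of $r^2$ — noting that the odd-in-$y$ contribution from the $O(r^3)$ term of the metric integrates to zero under $y\mapsto -y$ — to obtain
\[
E(u_0,\tilde g) = E(u_0,\delta) + r^2 \int_{\Omega_\lambda} \Bigl[\tfrac{1}{2}(\operatorname{tr} h)\,|\nabla u_0|^2 - h(\nabla u_0, \nabla u_0)\Bigr]\, dy + o(r^2).
\]
Two curvature identities then finish the computation. First, because $\nabla u_0$ is radial, $h(\nabla u_0, \nabla u_0) = (u_0'(|y|))^2\,h(\hat y, \hat y)$, and the scalar $R_{ikjl}(p)\,y^i y^j y^k y^l$ vanishes identically by the antisymmetry $R_{ikjl} = -R_{kijl}$, so this term drops out entirely. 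Second, $\operatorname{tr} h(y) = -\tfrac{1}{3} R_{ij}(p)\,y^i y^j$, and the rotational-symmetry identity $\int y^i y^j f(|y|)\,dy = (\delta^{ij}/n)\int |y|^2 f(|y|)\,dy$ collapses the Ricci tensor into the scalar curvature:
\[
E(u_0,\tilde g) - E(u_0,\delta) = -\frac{S(p)\,r^2}{6n}\int_{\Omega_\lambda} |y|^2\,(u_0'(|y|))^2\,dy + o(r^2).
\]

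Finally, inserting $u_0'(s) = (2-n)s^{1-n}/(\lambda^{2-n}-1)$ reduces the remaining radial integral to $\int_1^\lambda s^{3-n}\,ds$, which equals $\lambda-1$, $\log\lambda$, or $(\lambda^{4-n}-1)/(4-n)$ in dimensions $3$, $4$, and $\ge 5$ respectively, producing exactly the three-case split of \eqref{eqn_main0}. Dividing by the Euclidean leading term $E(u_0,\delta) = (n-2)\omega_{n-1}/(1-\lambda^{2-n})$ converts the additive correction into the claimed multiplicative form, which after rearrangement using $c_n(r,\lambda r) = r^{n-2}/(1-\lambda^{2-n})$ assembles into the unified \eqref{eqn_main}. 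I expect the main obstacle to be the variational lemma in the second paragraph: without that shortcut one would have to construct the first-order correction to $\tilde u$ by explicitly solving a Poisson equation on $\Omega_\lambda$ and then track it through the energy, which is considerably more intricate. With the variational shortcut in place, the remaining work reduces to the two tensor identities above and an elementary one-variable integration.
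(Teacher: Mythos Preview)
Your argument is correct and takes a genuinely different route from the paper. The paper proves the asymptotic by establishing matching upper and lower bounds separately: the upper bound (Proposition~\ref{prop_ub}) via Szeg\"o's level-set prescription together with the area expansion \eqref{A} for geodesic spheres, and the lower bound (Proposition~\ref{prop_lb}) via a Poincar\'e--Faber--Szeg\"o symmetrization argument that requires Druet's sharp local isoperimetric inequality (Theorem~\ref{thm_druet}) as well as a careful location and convexity analysis of the level sets of the true harmonic function (Lemmas~\ref{lemma_grad_u} and~\ref{lemma_area}). Your perturbative approach instead rescales to a fixed annulus and exploits the identity $E(u_0,\tilde g)-E(\tilde u,\tilde g)=\int|\nabla(u_0-\tilde u)|_{\tilde g}^2\,dV_{\tilde g}$, which together with the elliptic estimate $\|\tilde u-u_0\|_{H^1}=O(r^2)$ shows that only $E(u_0,\tilde g)$ needs to be expanded; the curvature identities $R_{ikjl}\,y^iy^jy^ky^l=0$ and $\operatorname{tr} h=-\tfrac{1}{3}R_{ij}y^iy^j$ then reduce everything to a one-variable integral. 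This is more elementary in that it avoids the external input of Druet's theorem and the geometric lemmas on level sets, and it yields both directions of the asymptotic at once rather than as separate inequalities. The paper's approach, on the other hand, situates the result within the classical capacity--area and capacity--volume theory, gives a slightly sharper $O(r^4)$ error in the upper bound, and makes transparent why the isoperimetric deficit is the mechanism behind the scalar-curvature correction.
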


We briefly explain how \eqref{eqn_main} follows from \eqref{eqn_main0}. We note that $\frac{c_n(r,\lambda r)}{c_{n-2}(r, \lambda r)} = \frac{1-\lambda^{4-n}}{1-\lambda^{2-n}} r^2$ for $n\geq 5$, the term appearing in \eqref{eqn_main0}. For the low dimensions we define\footnote{These expressions have natural interpretations as the one- and two-dimensional relative capacities of a ball in $\R$ or $\R^2$ of radius $r$ relative to a concentric ball of radius $\lambda r$, provided we define the capacity in one and two dimensions as in \eqref{eqn_cap} (with normalization factors of $\frac{1}{2}$ for $n=1$ and $\frac{1}{2\pi}$ for $n=2$.)}:
$$c_1(r, \lambda r) = \frac{1}{(\lambda-1)r}, \qquad c_2(r,\lambda r) = \frac{1}{\log \lambda},$$
and it is then easy to see \eqref{eqn_main} agrees with \eqref{eqn_main0} in all cases. (Thus, the formula for $\capac_g(r,\lambda r)$ involves the Euclidean relative capacity in dimensions $n$ and $n-2$ --- this explains the anomaly in dimension four, as harmonic functions behave differently in dimension two.)

Theorem \ref{thm_main} shows that scalar curvature is determined by the local behavior of capacity, or equivalently, by the local behavior of  the ``nearly radial'' harmonic functions in Lemma \ref{lemma_grad_u}.

The following corollary is immediate: nonnegative scalar curvature essentially corresponds to the relative capacity being smaller in the manifold than in Euclidean space.
\begin{cor}
\label{cor_main}
Let $(M,g)$ be a Riemannian $n$-manifold, $n \geq 3$. Then the scalar curvature at $p \in M$ is nonnegative if and only if
\begin{equation}
\label{eqn_nnsc}
\lim_{r \to 0} \frac{1}{r^2} \left(1- \frac{\capac_g (r,\lambda r)}{c_n(r,\lambda r)}\right) \geq 0.
\end{equation}
\end{cor}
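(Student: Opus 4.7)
The strategy is to derive Corollary \ref{cor_main} as an immediate consequence of Theorem \ref{thm_main}. Rewriting \eqref{eqn_main} in the form
$$\frac{1}{r^2}\left(1 - \frac{\capac_g(r,\lambda r)}{c_n(r,\lambda r)}\right) = \frac{(n-2)\,S(p)}{6n\,|n-4|^*} \cdot \frac{1}{r^2} \cdot \frac{c_n(r,\lambda r)}{c_{n-2}(r,\lambda r)} + o(1),$$
the proof reduces to computing the $r \to 0$ limit of the right-hand side and verifying the sign of its coefficient.

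First I would observe that the ratio $\frac{1}{r^2}\cdot \frac{c_n(r,\lambda r)}{c_{n-2}(r,\lambda r)}$ is in fact independent of $r$: using the explicit formulas for $c_n$ (and $c_1$, $c_2$), the quotient $\frac{c_n(r,\lambda r)}{c_{n-2}(r,\lambda r)}$ is a constant multiple of $r^2$, equal to $\lambda r^2$ when $n=3$, $\frac{\log \lambda}{1-\lambda^{-2}}\,r^2$ when $n=4$, and $\frac{1-\lambda^{4-n}}{1-\lambda^{2-n}}\,r^2$ when $n \geq 5$. In each case the constant is strictly positive for $\lambda > 1$: for $n \geq 5$, since $2-n < 4-n < 0$, both $1-\lambda^{2-n}$ and $1-\lambda^{4-n}$ are positive and hence their ratio is as well. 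The prefactor $\frac{n-2}{6n|n-4|^*}$ is clearly also positive.

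Combining these observations, the limit in \eqref{eqn_nnsc} exists and equals $C(n,\lambda)\cdot S(p)$ for some explicit constant $C(n,\lambda) > 0$. Both directions of the stated equivalence follow at once: the limit is nonnegative if and only if $S(p)$ is. I do not anticipate any obstacle here, as the corollary is essentially a matter of checking the sign of an explicit constant; the real content lies in Theorem \ref{thm_main} itself.
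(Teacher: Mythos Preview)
Your proposal is correct and matches the paper's own treatment: the paper states the corollary as ``immediate'' from Theorem~\ref{thm_main}, and your argument is precisely the straightforward unpacking of \eqref{eqn_main}, checking that the $r$-independent constant $\frac{(n-2)}{6n|n-4|^*}\cdot\frac{1}{r^2}\cdot\frac{c_n(r,\lambda r)}{c_{n-2}(r,\lambda r)}$ is strictly positive in each dimension.
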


We note this suggests a possible definition for weakly nonnegative scalar curvature in any metric space in which the capacity can be defined (in the same way that \eqref{V} is well-known to lead to a definition of nonnegative scalar curvature in metric spaces in which volume is defined). This includes, for example, $C^0$ Riemannian manifolds. Specifically, one could say the scalar curvature at a point $p$ is weakly nonnegative if \eqref{eqn_nnsc} holds with the limit replaced with a $\liminf$, for all (or, perhaps, some) $\lambda > 1$. 

\medskip

It would be interesting to compute higher-order terms in the expansion for $\capac_g(r,\lambda r)$ for $r$ small. This is more difficult than the corresponding problem for volumes and areas of small balls, where more terms are known  \cite{GV}. The same method in section \ref{sec_ub} for finding an upper bound could easily be extended using the additional known terms in the expansion for $A(r)$, though it is not clear such a bound would be sharp at the $r^{n+2}$ level. A better lower bound would be more challenging to determine, as the techniques in section \ref{sec_lb} do not seem to readily yield additional information.

One source of interest in higher-order terms for $V(r)$ came from the famous conjecture of Gray and Vanhecke \cite{GV} that if in a Riemannian manifold $V(r)= \beta_n r^n$ at every point for small $r$, then the manifold is flat. Based on this, we conjecture the following (for which knowledge of the higher-order terms would be very useful):
\begin{conj}
\label{conj_cap}
Let $(M,g)$ be a Riemannian manifold of dimension $n \geq 3$. If for every $p \in M$, the relative capacity $\capac_g(r,\lambda r)$ equals $\frac{r^{n-2}}{1-\lambda^{2-n}}$ for some $\lambda > 1$, then $g$ is flat.
\end{conj}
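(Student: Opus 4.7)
The plan is to bootstrap from Theorem \ref{thm_main} by pushing the expansion \eqref{eqn_main} of $\capac_g(r,\lambda r)$ to higher order in $r$ and extracting enough pointwise curvature identities to force flatness, in parallel with what is known for the Gray--Vanhecke volume conjecture. As a first step I would apply Theorem \ref{thm_main} directly: the hypothesis $\capac_g(r,\lambda r) = \frac{r^{n-2}}{1-\lambda^{2-n}}$ at each $p$ forces the $r^2$ coefficient in \eqref{eqn_main} to vanish, so $S(p) \equiv 0$ and $(M,g)$ is scalar flat.

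The main step is then to compute the $r^4$ term in the expansion of $\capac_g(r,\lambda r) / c_n(r,\lambda r)$. By analogy with the Gray--Vanhecke $r^4$ coefficients in \eqref{V} and \eqref{A}, this term should be a linear combination of $|\mathrm{Riem}|^2$, $|\mathrm{Ric}|^2$, $\Delta S$, and $S^2$ (the last already zero), with coefficients that are explicit rational functions of $\lambda$. For the upper bound I would extend the test-function construction of section \ref{sec_ub} using the further known terms in the Gray--Vanhecke expansion of $A(r)$. For the matching lower bound I would expand the capacity-minimizing harmonic function $u$ in geodesic normal coordinates around $p$ as a perturbation of the Euclidean radial capacitor $u_E$, substitute into $\Delta_g u = 0$ written in normal coordinates, and solve iteratively for the successive correction terms; the Dirichlet integral $\int|\nabla u|_g^2\, dV$ can then be evaluated order by order.

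With the $r^4$ coefficient in hand, the hypothesis forces a pointwise curvature identity at every $p$. In dimension $n=3$, the Riemann tensor is algebraically determined by the Ricci tensor, so scalar flatness together with the vanishing of the $|\mathrm{Ric}|^2$ contribution would force $\mathrm{Ric} \equiv 0$ and hence flatness --- the analogue of the known $n=3$ case of Gray--Vanhecke. For $n \geq 4$, a single identity among $|\mathrm{Riem}|^2$, $|\mathrm{Ric}|^2$, and $\Delta S$ is not enough, and one would either first prove a stronger version in which the hypothesis holds simultaneously for all (or infinitely many) $\lambda$, so that the $\lambda$-dependence of the coefficient separates the three invariants, or iterate the expansion to still higher order in $r$ to extract additional independent constraints on the curvature jets at $p$.

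The main obstacle is the sharp lower bound at order $r^{n+2}$. The author notes explicitly that the lower-bound argument of section \ref{sec_lb} does not readily yield higher-order information, so either the harmonic-function expansion in normal coordinates outlined above, or a substantially new technique, will be required. A secondary obstacle, shared with Gray--Vanhecke itself, is that even a complete asymptotic expansion of $\capac_g$ may furnish algebraically insufficient curvature information in high dimensions to conclude flatness directly --- which is why Conjecture \ref{conj_cap} is only plausible by analogy with, and should be expected to be at least as difficult as, Gray--Vanhecke.
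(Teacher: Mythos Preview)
This statement is a \emph{conjecture} in the paper, not a theorem; the paper offers no proof. The only claim the paper makes is that Theorem~\ref{thm_main} gives partial progress by forcing $S \equiv 0$, which is exactly your first step. Everything beyond that in your proposal is a research plan, not a proof, and you acknowledge as much.

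Your plan is reasonable as a strategy and is consistent with the paper's own remarks: the author explicitly says that higher-order terms would be interesting but that the upper-bound method of section~\ref{sec_ub} extends more easily than the lower-bound method of section~\ref{sec_lb}, which ``does not seem to readily yield additional information.'' You correctly flag this as the main obstacle. However, two points deserve emphasis. First, your sketch for the lower bound --- expanding the harmonic capacitor in normal coordinates and computing the Dirichlet integral order by order --- is precisely the step the paper does \emph{not} carry out, and there is no indication it is routine; the paper's actual lower bound goes through Druet's isoperimetric inequality and a symmetrization argument, which has no obvious higher-order refinement. Second, even granting a full $r^4$ expansion, your own analysis shows that for $n \geq 4$ a single $\lambda$ yields only one scalar identity among $|\mathrm{Riem}|^2$, $|\mathrm{Ric}|^2$, $\Delta S$, which is insufficient; and the conjecture as stated assumes the hypothesis only for \emph{some} $\lambda$, so your suggested fix of varying $\lambda$ is not available. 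In short: what you have written is a plausible outline of how one might attack the conjecture, together with an honest list of the gaps, but it is not a proof and the paper does not contain one either.
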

Theorem \ref{thm_main} gives partial progress: it implies that such $g$ is scalar-flat.

\begin{outline}
In sections \ref{sec_ub} and \ref{sec_lb} below we establish upper and lower bounds for the relative capacity in Propositions \ref{prop_ub} and \ref{prop_lb}. Together these will immediately imply Theorem \ref{thm_main}.
\end{outline}

\section{Upper bound on capacity}
\label{sec_ub}
In this section we find an upper bound on the relative capacity of small concentric geodesic balls. Throughout the paper, we will use $B_g(p,r)$ to denote the open geodesic ball of radius $r$ with respect to $g$ about $p$; $\capac_g(r, \lambda r)$  will denote the capacity of $\overline{B_g}(p,r)$ relative to $B_g(p, \lambda r)$.

\begin{prop}
\label{prop_ub}
Let $(M,g)$ be a Riemannian manifold of dimension $n \geq 3$, and let $p \in M$. Fix $\lambda > 1$. Then for $r>0$ small:
$$\capac_g(r, \lambda r)  \leq \begin{cases}
\frac{r}{1-\lambda^{-1}}\left(1- \frac{ S(p) }{18} \lambda r^2 + O(r^4)\right), & n=3\\
\frac{r^2}{1-\lambda^{-2}} \left(1-\frac{S(p)}{12}\frac{\log(\lambda)}{(1-\lambda^{-2})}r^2+O(r^4)\right), &n=4\\
\frac{r^{n-2}}{1-\lambda^{2-n}}\left(1-\frac{(n-2)S(p)}{6n(n-4)}\frac{1-\lambda^{4-n}}{1-\lambda^{2-n}}r^2 + O(r^4)\right), &n\geq 5.
\end{cases}$$
\end{prop}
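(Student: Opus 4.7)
The plan is to exploit the variational definition of capacity: any admissible test function gives an upper bound on $\capac_g(r,\lambda r)$ via its Dirichlet energy. Working inside the injectivity radius of $p$ (which is $>\lambda r$ for $r$ small), I would use geodesic polar coordinates about $p$, in which $B_g(p,\rho)$ is genuinely a coordinate ball for each $\rho \leq \lambda r$. In these coordinates the natural ansatz is a radial test function $\phi(x) = f(\rho(x))$, where $\rho(x) = d_g(x,p)$ and $f\colon[r,\lambda r]\to[0,1]$ is smooth with $f(r)=0$, $f(\lambda r)=1$ (extended by $0$ inside and $1$ outside). Such a $\phi$ is Lipschitz, satisfies the boundary conditions, and has $|\nabla\phi|_g = |f'(\rho)|$.

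Since $dV = dA_\rho\,d\rho$ where $dA_\rho$ is the induced area form on the geodesic sphere of radius $\rho$, one gets immediately
\begin{equation*}
\int_M |\nabla\phi|^2\,dV = \int_r^{\lambda r} (f'(\rho))^2 A(\rho)\,d\rho.
\end{equation*}
This one-dimensional problem can be minimized explicitly: the Cauchy--Schwarz inequality (or the Euler--Lagrange equation, which gives $f'(\rho) = C/A(\rho)$) shows the minimum over admissible $f$ equals $\left(\int_r^{\lambda r} \frac{d\rho}{A(\rho)}\right)^{-1}$. Choosing the minimizing $f$ thus yields
\begin{equation*}
\capac_g(r,\lambda r) \;\leq\; \frac{1}{(n-2)\omega_{n-1}} \left(\int_r^{\lambda r} \frac{d\rho}{A(\rho)}\right)^{-1}.
\end{equation*}

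The remainder of the proof is asymptotic analysis. Inverting the expansion \eqref{A} gives $\frac{1}{A(\rho)} = \frac{1}{\omega_{n-1}\rho^{n-1}}\bigl(1 + \frac{S(p)}{6n}\rho^2 + O(\rho^4)\bigr)$, so the integral splits as $\frac{1}{\omega_{n-1}}\int_r^{\lambda r} \rho^{1-n}d\rho + \frac{S(p)}{6n\omega_{n-1}}\int_r^{\lambda r}\rho^{3-n}d\rho$ plus an error of order $\int_r^{\lambda r}\rho^{5-n}d\rho$. The leading integral produces the Euclidean capacity $c_n(r,\lambda r)$; the second integral is exactly where the dimensional dichotomy emerges, since $\int_r^{\lambda r}\rho^{3-n}d\rho$ equals $(\lambda-1)r$, $\log\lambda$, or $\frac{r^{4-n}(1-\lambda^{4-n})}{n-4}$ according as $n=3$, $n=4$, or $n\geq 5$. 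Plugging in and inverting via a geometric series (the correction term being small compared to the leading one when $r$ is small) yields the three cases of the claimed bound.

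The main obstacle is not conceptual but bookkeeping: ensuring the error terms combine to $O(r^4)$ after inversion, and confirming that the coefficient $\frac{(n-2)S(p)}{6n(n-4)}\cdot\frac{1-\lambda^{4-n}}{1-\lambda^{2-n}}$ emerges from the ratio $\frac{1}{(n-2)\omega_{n-1}} \cdot \frac{S(p)}{6n\omega_{n-1}} \cdot (\text{second integral})/(\text{first integral})^2$ in each dimension, with the appropriate logarithmic or linear substitutions in dimensions four and three. A minor point to check is that restricting to \emph{radial} test functions is permissible: this is fine for an upper bound, since the variational problem is an infimum over all admissible $\phi$, so any choice gives an upper bound; one does not need that the radial $f$ is actually the true minimizer.
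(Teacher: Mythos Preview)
Your proposal is correct and follows essentially the same approach as the paper. The paper presents the argument via the general P\'olya--Szeg\H{o} framework (test functions constant on prescribed level sets, co-area formula, H\"older to optimize the profile) and then specializes by taking $\psi$ to be the distance from $K$, arriving at exactly your inequality $\capac_g(r,\lambda r) \leq \frac{1}{(n-2)\omega_{n-1}}\bigl(\int_r^{\lambda r} A(\rho)^{-1}\,d\rho\bigr)^{-1}$; the asymptotic expansion of $1/A(\rho)$ and the dimensional case analysis then proceed just as you describe.
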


\begin{proof}
We first recall a classical technique for estimating the capacity from above, described in \cite[Section 2.5]{PS}. The idea is to prescribe the level sets of a function, then consider test functions that are constant on these level sets. In this way one is essentially considering a function of a single variable; by making a good choice of such function, an upper bound follows. We recall the details below, based on the exposition in \cite{BM}.

Let $(M,g)$ be a Riemannian $n$-manifold, $n \geq 3$.
Suppose $K \subset \Omega \subseteq M$, where $K$ is  nonempty and compact and $\Omega$ is open with compact closure. Fix $\psi$ as a smooth function on the closure of $\Omega \setminus K$ that vanishes on $\partial K$ and equals a constant $c>0$ on $\partial \Omega$, with $0 \leq \psi \leq c$. Consider a Lipschitz function $f:[0,c] \to \R$ with $f(0)=0$ and $f(c)=1$, and let $\phi = f \circ \psi$. Then using the co-area formula,
\begin{align*}
(n-2)\omega_{n-1} \capac_g(K;\Omega)  &\leq \int_{\Omega \setminus K} |\nabla \phi|^2 dV\\ 
&= \int_{\Omega \setminus K} f'(\psi)^2 |\nabla \psi|^2 dV\\ 
&=  \int_{0}^c f'(t)^2 \int_{\Sigma_t} |\nabla \psi| dA\, dt,
\end{align*}
where $\Sigma_t = \psi^{-1}(t)$. 
The idea is to choose $f$ to obtain the smallest possible value of this integral. Let $T(t) =  \int_{\Sigma_t} |\nabla \psi| dA$. 
From H\"older's inequality one can see the choice in which $f'$ is a constant multiple of $\frac{1}{T(t)}$ is what is sought. To obtain the correct boundary conditions, we choose
$$f(t) = \left(\int_0^c \frac{1}{T(\tau)}d\tau\right)^{-1} \int_0^t  \frac{1}{T(\tau)}d\tau.$$
With this choice we obtain the upper bound
$$\capac_g(K;\Omega) \leq \frac{1}{(n-2)\omega_{n-1}} \left(\int_0^c \frac{1}{\int_{\Sigma_t} |\nabla \psi| dA} \right)^{-1},$$
which only depends upon the choice of $\psi$.

Now consider the special case in which $\partial \Omega$ is a level set of the distance function from $K$, i.e. $\partial \Omega$ and $\partial K$ are a constant distance $c$ apart. Then we may use this distance function as $\psi$ above so $|\nabla \psi|=1$ and immediately obtain
\begin{equation}
\label{eqn_Sz}
\capac_g(K;\Omega) \leq \frac{1}{(n-2)\omega_{n-1}}  \left(\int_0^c \frac{1}{|\Sigma_t|} dt\right)^{-1},
\end{equation}
where $\Sigma_t$ is the surface whose distance from $K$ equals $t$ and $|\Sigma_t|$ is its area. This type of inequality was originally due to Szeg\"o; see \cite[Section 3.4]{PS}. 

Now we will apply this to the case in which $K$ and $\Omega$ are concentric small geodesic balls. Fix a point $p \in M$, and assume $0<R_1 < R_2$, where $R_2$ is less than the injectivity radius at $p$. We consider the balls $B_g(p,R_1) \subset B_g(p, R_2)$ about $p$.
By \eqref{eqn_Sz}:
\begin{equation}
\label{R1_R2_ub}
\capac_g(R_1, R_2)  \leq \frac{1}{(n-2) \omega_{n-1}} \left( \int_{R_1}^{R_2} \frac{1}{A(r)} dr\right)^{-1},
\end{equation}
where $A(r)$ is the area of the geodesic sphere of radius $r$. From \eqref{A},
\begin{equation}
A(r) = \omega_{n-1} r^{n-1}\left(1-\frac{S(p)}{6n}r^2\right) + E(r),
\end{equation}
where $|E(r)| \leq Kr^{n+3}$ for some constant $K$. It is then straightforward to estimate the following integral:
\begin{equation}
\omega_{n-1} \int_{R_1}^{R_2} \frac{1}{A(r)} dr = \int_{R_1}^{R_2} \frac{1}{ r^{n-1} \left(1-\frac{S(p)}{6n}r^2\right)} - \frac{E(r)}{\left[ r^{n-1}\left(1-\frac{S(p)}{6n}r^2\right)\right]\left[r^{n-1} \left(1-\frac{S(p)}{6n}r^2\right) + E(r)\right]}dr.\label{eqn_Er}
\end{equation}
The second summand in the integrand is at most $Kr^{5-n}$ for a possibly different constant $K$. The first term can be integrated and estimated explicitly. Let $\sigma = \frac{S(p)}{6n}$. Altogether, we obtain:
\begin{equation}
\label{integral_Ar}\omega_{n-1} \int \frac{1}{ A(r)} dr = 
\begin{cases}
-\frac{1}{r} + \sigma r + O(r^3), & n=3\\
-\frac{1}{2r^2} + \sigma \log(r) + O(r^2), & n=4\\
-\frac{1}{(n-2)r^{n-2}} -\frac{\sigma}{(n-4)r^{n-4}} +O(r^{6-n}), & n\geq 5, n \neq 6\\
-\frac{1}{4r^4} -\frac{\sigma}{2r^{2}} +\sigma^2 \log(r) + O(r^2), & n=6.
\end{cases}
\end{equation}

Now assume $R_2/R_1$ is a fixed constant $\lambda >1$ and let $r=R_1$. From \eqref{R1_R2_ub}, \eqref{eqn_Er}, and \eqref{integral_Ar}, it is elementary to obtain Proposition \ref{prop_ub}.
\end{proof}

\section{Lower bound on capacity}
\label{sec_lb}
In this section we prove a lower bound on the relative capacity of concentric geodesic balls:

\begin{prop}
\label{prop_lb}
Let $(M,g)$ be a Riemannian manifold of dimension $n \geq 3$, and let $p \in M$. Fix $\lambda > 1$. Then for $r>0$ small, the capacity of $\overline{B_g}(p,r)$ relative to $B_g(p, \lambda r)$ satisfies:
$$\capac_g(r, \lambda r) \geq 
\begin{cases}
\frac{r}{1-\lambda^{-1}}\left(1- \frac{S(p) }{18} \lambda r^2 + o(r^2)\right), & n=3\\
\frac{r^2}{1-\lambda^{-2}} \left(1-\frac{S(p)}{12}\frac{\log(\lambda)}{(1-\lambda^{-2})}r^2+o(r^2) \right), &n=4\\
\frac{r^{n-2}}{1-\lambda^{2-n}}\left(1-\frac{(n-2)S(p)}{6n(n-4)} \frac{1-\lambda^{4-n}}{1-\lambda^{2-n}}r^2 + o(r^2)\right), &n\geq 5.
\end{cases}$$
\end{prop}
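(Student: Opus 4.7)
The plan is to compare the actual minimizer $u$ of the capacity problem on the annulus $B_g(p,\lambda r)\setminus B_g(p,r)$ with an explicit approximate harmonic function $u_0$ depending only on the geodesic distance $\rho=d_g(p,\cdot)$. Define
\begin{equation*}
u_0(\rho)=\beta\int_r^\rho\frac{ds}{A(s)},\qquad \beta=\Bigl(\int_r^{\lambda r}\frac{ds}{A(s)}\Bigr)^{-1},
\end{equation*}
so that $u_0$ realizes the boundary values $0$ on $\partial B_g(p,r)$ and $1$ on $\partial B_g(p,\lambda r)$. The co-area formula immediately yields $\int|\nabla u_0|^2\,dV=\beta$, and expanding $\beta$ via the expansion \eqref{A} of $A(s)$ --- exactly as in the proof of Proposition \ref{prop_ub} --- reproduces the three dimension-by-dimension right hand sides asserted in the statement.

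Set $v=u-u_0$. Then $v=0$ on both spheres and $\Delta v=-\Delta u_0$ by harmonicity of $u$. A standard integration by parts gives the clean identity
\begin{equation*}
\int|\nabla u|^2\,dV=\int|\nabla u_0|^2\,dV-\int|\nabla v|^2\,dV,
\end{equation*}
so the lower bound reduces to showing $\int|\nabla v|^2\,dV=o(r^n)$, since the first non-Euclidean correction to $\int|\nabla u|^2\,dV$ has absolute size $r^n$. Writing $\int|\nabla v|^2\,dV=-\int v\,\Delta u_0\,dV\leq\|v\|_{L^2}\|\Delta u_0\|_{L^2}$, and invoking a one-dimensional Poincar\'e inequality in the radial direction (available because $v$ vanishes on both radial boundaries of an annulus of radial extent $(\lambda-1)r$), I obtain $\|\nabla v\|_{L^2}^2\lesssim r^2\|\Delta u_0\|_{L^2}^2$.

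To estimate $\|\Delta u_0\|_{L^2}$, note that because $u_0$ is radial one has $\Delta u_0=\phi'(\rho)\bigl(\Delta_g\rho-A'(\rho)/A(\rho)\bigr)$ with $\phi=u_0$, so $\Delta u_0$ measures the deviation of $\Delta_g\rho$ from its spherical mean. The normal-coordinate expansion $\Delta_g\rho=(n-1)/\rho-(\rho/3)\mathrm{Ric}(\partial_\rho,\partial_\rho)+O(\rho^2)$ shows this deviation is of order $\rho$, driven by the traceless part of the Ricci tensor at $p$. Combined with $\phi'(\rho)=\beta/A(\rho)\sim r^{n-2}\rho^{1-n}$, integrating over the annulus in each of the three regimes $n=3$, $n=4$, $n\geq 5$ (mirroring the case split for $\int_r^{\lambda r}\rho^{3-n}\,d\rho$) yields $\|\Delta u_0\|_{L^2}^2=O(r^n)$, hence $\int|\nabla v|^2\,dV=O(r^{n+2})$, which is $o(r^n)$ as needed. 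The main obstacle is carefully verifying that the Poincar\'e constant and the pointwise expansion of $\Delta_g\rho$ hold uniformly at the required order on the shrinking geodesic annulus --- these near-radial estimates are presumably packaged into Lemma \ref{lemma_grad_u} advertised in the introduction. Once available, the proposition follows by combining the identity above with the expansion of $\beta$ already computed in the proof of Proposition \ref{prop_ub}.
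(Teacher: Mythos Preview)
Your argument is correct and in fact genuinely different from (and simpler than) the paper's proof. A few minor points: the sign in $\int|\nabla v|^2=-\int v\,\Delta u_0$ is off (it should be $+\int v\,\Delta u_0$), though this is irrelevant after Cauchy--Schwarz; and your appeal to Lemma~\ref{lemma_grad_u} is misdirected---that lemma controls the \emph{harmonic} function $u_r$, whereas what you actually need (the Poincar\'e constant $\sim r$ on the annulus and the expansion $\Delta_g\rho=(n-1)/\rho-\tfrac{\rho}{3}\mathrm{Ric}(\partial_\rho,\partial_\rho)+O(\rho^2)$) are elementary facts you have essentially supplied yourself.

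The paper takes a much more indirect route: it follows the classical P\'olya--Szeg\H{o} symmetrization proof of the capacity--volume inequality, splitting the lower bound into a ``Euclidean'' piece $\I$ (handled via the volume expansion \eqref{V}) and an ``isoperimetric defect'' piece $\II$, then controls $\II$ using Druet's sharp local isoperimetric inequality (Theorem~\ref{thm_druet}) together with Lemma~\ref{lemma_grad_u} to pin down the level sets of the true harmonic function. Because Druet's inequality comes with an unquantified $\epsilon$, the paper only obtains an $o(r^2)$ error. Your approach, by contrast, exploits the exact Dirichlet-principle identity $\int|\nabla u|^2=\int|\nabla u_0|^2-\int|\nabla v|^2$ with the \emph{same} test function $u_0$ that already gave the upper bound in Proposition~\ref{prop_ub}, reducing everything to the single estimate $\|\nabla v\|_{L^2}^2=O(r^{n+2})$. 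This is more elementary, avoids the isoperimetric machinery entirely, and in fact yields the sharper error $O(r^4)$ rather than $o(r^2)$---matching the upper bound and answering in the affirmative the improvement the paper only conjectures in its closing Remark (and without any constant-scalar-curvature or bounded-geometry hypothesis).
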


Our approach is motivated by the proof of the capacity-volume inequality of Poincar\'e--Faber--Szeg\"o for Euclidean space \cite{PS} (also known as the isocapacitary inequality). Such an inequality (specifically, its relative version) need not hold in a Riemannian manifold, but the point is that on a small scale it will nearly hold; our goal is to extract the first nontrivial error term. The approach is also inspired by the complementary result on the capacity of large balls in an asymptotically flat manifold in \cite{Jau}.

We first recall an isoperimetric inequality that will be used in the proof. To motivate it, note formulas \eqref{V} and \eqref{A} together give control on the isoperimetric ratios of small geodesic balls, but in general balls are not isoperimetric minimizers, even locally. The following result of Druet provides an isoperimetric inequality for arbitrary regions inside a small ball.

\begin{thm}[Druet \cite{Dru}, equation (2.1)]
\label{thm_druet}
Let $(M,g)$ be a Riemannian manifold of dimension $n \geq 2$, and let $p \in M$. For any $\epsilon > 0$, there exists $r_\epsilon>0$ such that for any Borel set $\Omega \subseteq \overline{B_g}(p,r_\epsilon)$, 
$$|\partial^* \Omega|^2 \geq n^2 (\beta_n)^{\frac{2}{n}} |\Omega|^{\frac{2(n-1)}{n}} - \left(\frac{n}{n+2}S(p) + \epsilon \right) |\Omega|^2.$$
\end{thm}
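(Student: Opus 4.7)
The plan is to reduce the claim to a statement about volume-constrained isoperimetric minimizers and then execute a blow-up analysis at $p$, comparing with the Euclidean isoperimetric inequality to extract the scalar-curvature correction. Since the inequality depends only on $|\Omega|_g$ and $|\partial^* \Omega|_g$, it suffices to give a lower bound on the localized isoperimetric profile
\[h(v) := \inf\bigl\{|\partial^* \Omega|_g : \Omega \text{ Borel}, \; \Omega \subseteq \ol{B_g}(p, r_0), \; |\Omega|_g = v\bigr\}\]
for some fixed small $r_0$ to be chosen. For each $v \in (0, |B_g(p, r_0)|)$, the direct method (BV compactness together with lower semicontinuity of perimeter) produces a minimizer $\Omega_v$, and classical regularity theory gives $\partial^* \Omega_v$ smooth away from a singular set of Hausdorff codimension at least $7$.

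Next I would blow up at $p$: setting $\rho_v := (v/\beta_n)^{1/n} \to 0$ as $v\to 0$, I would pull $\Omega_v$ back through $\exp_p$ to a set $E_v \subset \R^n$ and dilate by $\rho_v^{-1}$ to obtain $\tilde E_v := \rho_v^{-1} E_v$, which has volume $\beta_n$ with respect to the rescaled metric $\tilde g_v(\xi) := (\exp_p^*g)(\rho_v \xi)$. Since $\tilde g_v \to g_{\R^n}$ in $C^\infty_{\mathrm{loc}}(\R^n)$, the rescaled set $\tilde E_v$ is an isoperimetric minimizer (on an increasingly large $\tilde g_v$-ball) for a metric that converges to the Euclidean one. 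Standard uniform density estimates for quasi-minimizers of perimeter, combined with BV compactness and the equality case of the Euclidean isoperimetric inequality, then give, up to translation, $\tilde E_v \to B_{\R^n}(0,1)$ in $L^1$ and in perimeter; Allard-type $\varepsilon$-regularity upgrades this to $C^{1,\alpha}$ convergence of boundaries.

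With the blow-up convergence in hand, I would expand using the normal-coordinate formula $\sqrt{\det g}(x) = 1 - \tfrac{1}{6}\mathrm{Ric}_{ij}(p)\,x^i x^j + O(|x|^3)$ together with the analogous expansion of the induced Riemannian hypersurface area element. Because $\partial^*\Omega_v$ is $C^{1,\alpha}$-close (after the $\rho_v$-rescaling) to the geodesic sphere $\partial B_g(p, \rho_v)$, the expansions \eqref{V} and \eqref{A} combined with a perturbation argument yield
\[|\Omega_v|_g = \beta_n \rho_v^n\bigl(1 - \tfrac{S(p)}{6(n+2)}\rho_v^2 + o(\rho_v^2)\bigr), \quad |\partial^*\Omega_v|_g = \omega_{n-1}\rho_v^{n-1}\bigl(1 - \tfrac{S(p)}{6n}\rho_v^2 + o(\rho_v^2)\bigr).\]
Inverting the first to express $\rho_v$ in terms of $v$ and squaring the second gives
\[h(v)^2 = n^2 \beta_n^{2/n}\, v^{2(n-1)/n} - \tfrac{nS(p)}{n+2}\,v^2 + o(v^2),\]
so the stated inequality follows once $r_\epsilon \leq r_0$ is chosen small enough that the $o(v^2)$ error is absorbed into $\epsilon v^2$ uniformly for all $v \leq |B_g(p,r_\epsilon)|$.

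The main obstacle is the blow-up convergence: I must prove uniform density estimates for the minimizers $\Omega_v$ that prevent concentration against $\partial B_g(p,r_0)$ and give strong compactness of $\tilde E_v$, and then upgrade $L^1$ convergence of sets to $C^{1,\alpha}$ convergence of boundaries at the precision needed to justify the area expansion for $\Omega_v$ at the $o(\rho_v^{n+1})$ level. A quantitative Euclidean isoperimetric stability estimate (Fuglede, or Figalli--Maggi--Pratelli) combined with the smooth convergence $\tilde g_v \to g_{\R^n}$ is the natural tool for closing this gap.
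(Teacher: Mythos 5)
This statement is not proved in the paper at all: Theorem~\ref{thm_druet} is quoted verbatim from Druet's paper~\cite{Dru} (his equation~(2.1)) and used as a black box in the proof of Proposition~\ref{prop_lb}. So there is no ``paper's own proof'' to compare against; what you have produced is a reconstruction of Druet's external argument, and on that basis it should be evaluated.

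Your outline does match, at a high level, the actual structure of Druet's proof (reduce to constrained isoperimetric minimizers, blow up, use Euclidean rigidity, Taylor-expand), so the roadmap is sound. But two concrete issues remain. First, a notational inconsistency: you define $\rho_v := (v/\beta_n)^{1/n}$, so that $|\Omega_v|_g = v = \beta_n\rho_v^n$ exactly by definition; yet you then write $|\Omega_v|_g = \beta_n\rho_v^n\bigl(1 - \tfrac{S(p)}{6(n+2)}\rho_v^2 + o(\rho_v^2)\bigr)$, which contradicts the definition. What you must mean is to take $\rho_v$ to be the geodesic radius for which $V(\rho_v)=v$; with that reinterpretation the algebra does close to the stated $h(v)^2 = n^2\beta_n^{2/n}v^{2(n-1)/n} - \tfrac{n}{n+2}S(p)\,v^2 + o(v^2)$, but the derivation as written is circular. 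Second, and more seriously, the quantitative step that you defer is genuinely the entire content of the theorem. Knowing that $\tilde E_v$ converges to a unit ball in $L^1$, or even that $\partial\tilde E_v$ converges in $C^{1,\alpha}$, does \emph{not} by itself yield the expansion of $|\partial^*\Omega_v|$ with an $o(\rho_v^{n+1})$ error: a $C^1$ deviation of order $\rho_v^{\delta}$ with $\delta\le 1$ would contribute a term of order $\rho_v^{n-1+2\delta}$ to the perimeter that competes with (or dominates) the $S(p)\rho_v^{n+1}$ term you are trying to isolate. You need to show the normalized radial deviation $\eta_v$ of $\partial\tilde E_v$ from the round sphere satisfies $\|\eta_v\|_{H^1}=o(\rho_v)$ (one expects $O(\rho_v^2)$), together with the observation that the volume constraint kills the linear term in $\eta_v$. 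This requires combining the constant-mean-curvature equation (or a quantitative isoperimetric stability estimate à la Fuglede/Figalli--Maggi--Pratelli, as you suggest) with the smooth convergence $\tilde g_v\to g_{\R^n}$; it is nontrivial, and without it the $o(v^2)$ term is not established. Finally, note that the blow-up point is in general not $p$ itself, since the constrained minimizer can drift toward wherever $S$ is largest inside $\ol{B_g}(p,r_0)$; this is harmless only because $S$ is continuous and $r_\epsilon\to 0$, and that replacement (of $S$ at the blow-up point by $S(p)+\epsilon/2$, say) should be made explicit when you extract the final constant.
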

Above, $|\partial^* \Omega|$ is the perimeter of $\Omega$ with respect to $g$ (which equals the boundary area of $\Omega$ if $\partial \Omega$ is sufficiently regular, e.g., $C^1$), and $|\Omega|$ is the volume of $\Omega$ with respect to $g$.

One important technical detail in the proof of Proposition \ref{prop_lb} pertains to the asymptotic behavior of the harmonic function that vanishes at radius $r$ and equals 1 at radius $\lambda r$, in the limit $r \to 0$. In light of the following lemma, these can be thought of as ``nearly radial'' harmonic functions:

\begin{lemma}
 \label{lemma_grad_u}
 Let $p$ be a point in a Riemannian $n$-manifold $(M,g)$, where $n \geq 3$. Fix a real number $\lambda>1$. For $r>0$ small, let $W_r$ be the closed metric annulus $\overline {B_g}(p,\lambda r) \setminus B_g(p,r)$. Let $u_r$ be the unique harmonic function on $W_r$ that vanishes on the inner boundary $\partial B_g(p,r)$ and equals 1 on the outer boundary $\partial B_g(p,\lambda r)$. Then in normal coordinates $y$ about $p$, for points in $W_r$,
 \begin{align}
u_r(y) &= \frac{1}{1-\lambda^{2-n}} \left(1-\frac{r^{n-2}}{|y|^{n-2}}\right) + O(r^{2}) \label{eqn_u}\\
 |\nabla u_r| &= \frac{(n-2)r^{n-2}}{1-\lambda^{2-n}} |y|^{1-n} + O(r)\label{eqn_Du}\\
\Hess(u_r)_{ij} &=  \frac{(n-2) r^{n-2}}{1-\lambda^{2-n}} \left( |y|^{-n} \delta_{ij} - \frac{n y^i y^j}{|y|^{n+2}}\right) + O(1),\label{eqn_DDu}
 \end{align}
 where the gradient norm and Hessian are taken with respect to $g$.
 \end{lemma}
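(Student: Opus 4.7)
The plan is to blow up the shrinking annulus to a fixed annulus via the rescaling $\tilde{y} = y/r$, $\tilde{g} := r^{-2} g$, so that the rescaled metric converges smoothly to Euclidean on a fixed compact domain and we can compare $u_r$ to the explicit Euclidean harmonic model via standard elliptic estimates.

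Concretely, I would set $\tilde{u}_r(\tilde{y}) := u_r(r\tilde{y})$, which is $\tilde{g}$-harmonic on the fixed annulus $A_\lambda := \{1 \leq |\tilde{y}| \leq \lambda\}$ with boundary values $0$ on $|\tilde{y}|=1$ and $1$ on $|\tilde{y}|=\lambda$. The standard normal coordinate expansion $g_{ij}(y) = \delta_{ij} - \tfrac{1}{3} R_{ikjl}(p) y^k y^l + O(|y|^3)$ rescales to $\tilde{g}_{ij}(\tilde{y}) = \delta_{ij} + O(r^2)$ in $C^k(A_\lambda)$ for every $k$. Let $\tilde{u}_0(\tilde{y}) := \frac{1-|\tilde{y}|^{2-n}}{1-\lambda^{2-n}}$ be the Euclidean harmonic model with matching boundary values. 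Then $w_r := \tilde{u}_r - \tilde{u}_0$ vanishes on $\partial A_\lambda$ and satisfies $\Delta_{\tilde{g}} w_r = -(\Delta_{\tilde{g}} - \Delta_\delta) \tilde{u}_0$, a right-hand side of size $O(r^2)$ in $C^{k,\alpha}(A_\lambda)$ since $\tilde{u}_0$ is smooth and fixed while the coefficients of the operator difference are $O(r^2)$ in $C^k$. Schauder estimates up to the boundary on the fixed smooth domain $A_\lambda$ then yield $\|w_r\|_{C^{2,\alpha}(A_\lambda)} = O(r^2)$.

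To translate back, I would write $u_r(y) = \tilde{u}_0(y/r) + w_r(y/r)$, which gives \eqref{eqn_u} immediately. The chain rule converts the $C^2$ bound on $w_r$ into errors of size $r^{-1} \cdot O(r^2) = O(r)$ for first partial derivatives and $r^{-2} \cdot O(r^2) = O(1)$ for second partial derivatives in $y$. To pass from coordinate derivatives to the $g$-geometric quantities I use $g^{ij} = \delta^{ij} + O(r^2)$ (so the $g$-inner product of two covectors of size $O(r^{-1})$ differs from the Euclidean one only at the claimed $O(1)$ order of $|\nabla u_r|_g^2$, which after taking square roots becomes an $O(r)$ correction to the leading $O(r^{-1})$ gradient norm) and $\Gamma^k_{ij}(y) = O(|y|) = O(r)$ in normal coordinates (so $\Gamma^k_{ij} \partial_k u_r = O(r) \cdot O(r^{-1}) = O(1)$ perturbs the coordinate Hessian only at the claimed order). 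A short computation using these facts and the explicit form of $\tilde{u}_0$ then produces \eqref{eqn_Du} and \eqref{eqn_DDu}.

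The one point requiring care is the uniformity of the Schauder constants in $r$, but this is standard: $\tilde{g}$ is uniformly $C^k$-close to $\delta$ on the fixed smooth annulus $A_\lambda$, hence $\Delta_{\tilde{g}}$ is uniformly elliptic with uniformly bounded coefficients as $r \to 0$. I do not anticipate any serious obstacle beyond this bookkeeping of orders; notably, no finer information on $g$ beyond $g_{ij} = \delta_{ij} + O(|y|^2)$ is needed, since the lemma only asserts the Euclidean leading term together with an error at the next order.
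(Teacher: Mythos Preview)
Your proposal is correct and follows essentially the same route as the paper: blow up the geodesic annulus to the fixed Euclidean annulus $\{1\le |x|\le \lambda\}$ via the rescaling $x=y/r$ and the metric $\hat g_r=r^{-2}\Phi_r^*g$, compare the pulled-back harmonic function to the explicit Euclidean model, and use boundary Schauder estimates (with the maximum principle to drop the $C^0$ term, since the difference vanishes on $\partial A_\lambda$) together with $\hat g_r\to\delta$ in $C^k$ to obtain a $C^{2,\alpha}$ bound of order $r^2$, then unwind the scaling and use $g^{ij}=\delta^{ij}+O(r^2)$, $\Gamma^k_{ij}=O(r)$ in normal coordinates to convert to $g$-gradient and $g$-Hessian. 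The paper's only cosmetic difference is that it bounds $\Delta_{\hat g_r}\varphi_0$ by rewriting it as $r^2\Phi_r^*(\Delta_g u_0)$ and estimating $\Delta_g u_0$ directly on $W_r$, whereas you bound it via the operator difference $(\Delta_{\tilde g}-\Delta_\delta)\tilde u_0$; these are the same computation.
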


We defer the proof until later in the section. We proceed with:

\begin{proof}[Proof of Proposition \ref{prop_lb}]
The initial part of the proof, through \eqref{eqn_I}, will be general (not specializing to balls) and will start out by following the standard proof of the capacity-volume inequality in the Euclidean case, as in \cite{PS}. Suppose $(M,g)$ is a Riemannian $n$-manifold, $n \geq 3$, with $K \subset \Omega \subset M$, where $K$ is compact and $\Omega$ is open with compact closure. Assume further that $\partial K$ and $\partial \Omega$ are smooth and nonempty. 

Let $u$ be the (harmonic) function realizing the infimum in $\capac_g(K; \Omega)$, and let $\Sigma_t$ be the level set $\{u=t\}$ for $t \in [0,1]$, which is smooth for almost all $t$. Then by the co-area formula and H\"older's inequality,
\begin{align*}
\capac_g(K; \Omega) &= \frac{1}{(n-2)\omega_{n-1}} \int_{\Omega \setminus K} |\nabla u|^2 dV\\
&= \frac{1}{(n-2)\omega_{n-1}} \int_0^1 \int_{\Sigma_t} |\nabla u| dA dt\\
& \geq \frac{1}{(n-2)\omega_{n-1}} \int_0^1 \frac{|\Sigma_t|^2}{\int_{\Sigma_t} \frac{1}{|\nabla u|} dA} dt.
\end{align*}
Let $\Omega_t = K \cup u^{-1}[0,t]$, so $\Omega_0 =K$ and $\Omega_1=\overline{\Omega}$. By the co-area formula,
$$|\Omega_t| = |K| + \int_0^t \int_{\Sigma_s} \frac{1}{|\nabla u|} dA ds,$$
so that for almost all $t$,
\begin{equation}
\label{eqn_d_dt_volume}
\frac{d}{dt}|\Omega_t| =  \int_{\Sigma_t} \frac{1}{|\nabla u|} dA.
\end{equation}
Define the isoperimetric ratio $I(t)$ of $\Omega_t$ by
$$\left(\frac{1}{\omega_{n-1}} |\Sigma_t|\right)^\frac{n}{n-1} = \frac{1}{\beta_n} |\Omega_t| \;I(t),$$
which is finite for almost all $t$. 
(If $(M,g)$ is Euclidean, the isoperimetric inequality states that $I(t) \geq 1$.) 
Inserting \eqref{eqn_d_dt_volume} and using the definition of $I(t)$, we have
\begin{align}
\capac_g(K; \Omega) &\geq\frac{\omega_{n-1}}{(n-2)(\beta_n)^{\frac{2(n-1)}{n}}} \int_0^1 \frac{|\Omega_t|^{\frac{2(n-1)}{n}} I(t)^{\frac{2(n-1)}{n} } }{\int_{\Sigma_t} \frac{1}{|\nabla u|} dA} dt\nonumber\\
&= \underbrace{\frac{\omega_{n-1}}{(n-2)(\beta_n)^{\frac{2(n-1)}{n}}} \int_0^1 \frac{|\Omega_t|^{\frac{2(n-1)}{n}} }{\frac{d}{dt}|\Omega_t|} dt}_{\I}+ \underbrace{\frac{\omega_{n-1}}{(n-2)(\beta_n)^{\frac{2(n-1)}{n}}} \int_0^1 \frac{|\Omega_t|^{\frac{2(n-1)}{n}} \left(I(t)^{\frac{2(n-1)}{n} }- 1\right) }{\int_{\Sigma_t} \frac{1}{|\nabla u|} dA} dt}_{\II}. \label{eqn_I_II}
\end{align}

We will estimate terms $\I$ and $\II$ separately. For $\I$, we continue to follow \cite{PS} and let $R(t)$ be the  volume radius of $\Omega_t$, i.e.
$$\beta_n R(t)^n = |\Omega_t|,$$
so for almost all $t$,
$$\frac{d}{dt}|\Omega_t| = \omega_{n-1} R(t)^{n-1} R'(t).$$ Then $\I$ becomes
$$\I = \frac{1}{(n-2)} \int_0^1 \frac{R(t)^{n-1} }{R'(t)} dt.$$

Now, let $\hat K$ and $\hat \Omega$ be concentric balls in Euclidean $n$-space (closed and open, respectively) with volumes equal to $|K|$ and $|\Omega|$, respectively. Let $\hat \Sigma_t$ be the Euclidean sphere with the same center as $\hat K$ and $\hat \Omega$ enclosing volume equal to $|\Omega_t|$. Then $\hat \Sigma_0 = \partial \hat K$ and $\hat \Sigma_1 = \partial \hat \Omega$. Let $\hat u$ be the function that equals $t$ on $\hat \Sigma_t$. For almost all $t$,
$$|\nabla \hat u| = \frac{1}{R'(t)}$$
on $\Sigma_t$. Then continuing the above and using the co-area formula:
\begin{align*}
\I &= \frac{1}{(n-2)\omega_{n-1}} \int_0^1 \int_{\hat \Sigma_t} |\nabla \hat u| dA dt\\
&= \frac{1}{(n-2)\omega_{n-1}} \int_{\hat \Omega \setminus \hat K} |\nabla \hat u|^2 dV\\
&\geq \capac_0(\hat K; \hat \Omega),
\end{align*}
by definition, the latter being the relative capacity in Euclidean space. (While it is not obvious that $\hat u$ is Lipschitz, the above calculation shows it is in $W^{1,2}$, and a standard  smoothing argument implies the desired inequality.) This value is readily calculated in terms of the volumes of $\hat \Omega$ and $\hat K$ using \eqref{eqn_euc_cap}, and hence in terms of $|K|$ and $|\Omega|$. Specifically, we obtain:
\begin{equation}
\label{eqn_I}
\I \geq \left(\left(\frac{\beta_n}{|K|}\right)^{\frac{n-2}{n}}-\left(\frac{\beta_n}{|\Omega|}\right)^{\frac{n-2}{n}}\right)^{-1}.
\end{equation}
This is where the ``classical'' part of the proof concludes. 

Now we specialize to the case in which $K$ and $\Omega$ are concentric small geodesic balls about some $p \in M$, with fixed ratio of their radii. Take $K = \overline {B_g}(p,r)$ and $\Omega = B_g(p, \lambda r)$ for a constant $\lambda>1$. We assume $2\lambda r$ is less than the injectivity radius at $p$, so that all metric spheres of radius less than $2\lambda r$ are smooth. The harmonic function previously called $u$, now vanishing on $\partial B_g(p,r)$ and equaling 1 on $\partial B_g(p,\lambda r)$, will be denoted by $u_r$.

We note from the volume expansion \eqref{V} that for $r$ small,
$$\left(\frac{\beta_n}{|B_g(p,r)|}\right)^{\frac{n-2}{n}} = \frac{1}{r^{n-2}} \left(1 + \frac{(n-2)S(p)}{6n(n+2)}r^2 + O(r^4) \right).$$
Using this at radii $r$ and $\lambda r$, \eqref{eqn_I} produces:
\begin{equation}
\label{eqn_I_lb}
\I \geq \frac{r^{n-2}}{1- \lambda^{2-n}} \left(1- \frac{(n-2)S(p)}{6n(n+2)}\cdot \frac{1-\lambda^{4-n}}{1-\lambda^{2-n}} r^2 + O(r^4) \right).
\end{equation}
Note the leading order term is $c_n(r,\lambda r)$, the Euclidean relative capacity.

We move on to term $\II$ in \eqref{eqn_I_II}. We first need to control the location of the level sets of $u_r$, as this will lead to volume and area bounds. From Lemma \ref{lemma_grad_u}, it follows that in a normal coordinate system $y$ about $p$,
\begin{equation}
\label{eqn_u_r}
u_r(y) = \frac{1}{1-\lambda^{2-n}}\left(1-\frac{r^{n-2}}{|y|^{n-2}}\right) + F_r(y),
\end{equation}
on the annular region $W_r=\overline {B_g}(p,\lambda r) \setminus B_g(p,r)$ for a smooth function $F_r$, where $|F_r(y)| \leq Cr^2$ for a constant $C$ independent of $r$.
We continue to let $\Sigma_t$ denote the $t$-level set of $u_r$, omitting $r$ from the notation.
If $y\in \Sigma_t$, then from \eqref{eqn_u_r} it follows that
\begin{equation}
\label{eqn_level_set}
\rho_-(t) \leq |y| \leq \rho_+(t), 
\end{equation}
where we introduce
\begin{align*}
\rho_-(t) &= \frac{r}{\left(1 - t(1-\lambda^{2-n}) + C' r^2 \right)^{\frac{1}{n-2}}},\\
\rho_+(t) &= \frac{r}{\left(1 - t(1-\lambda^{2-n}) - C' r^2 \right)^{\frac{1}{n-2}}},
\end{align*}
for a constant $C'$ depending only on $C$ and $\lambda$.
(We can shrink $r$ if necessary to make the denominator in $\rho_+(t)$ positive for all $t \in [0,1]$.) We note for later reference that
\begin{align}
\rho_-(t) &\geq r + O(r^3) \label{eqn_r_minus}\\
\rho_+(t) &\leq \lambda r + O(r^3). \label{eqn_r_plus}
\end{align}
Now, from \eqref{eqn_level_set}, $\Sigma_t$ encloses the sphere $\partial B_g(p,\rho_-(t))$ and is enclosed by the sphere $\partial B_g(p,\rho_+(t))$. (Again, we shrink $r$ if necessary to arrange $\rho_+(t) \leq 2\lambda r$ to guarantee smoothness of these spheres.)
Thus, 
\begin{equation}
\label{eqn_balls}
B_g(p,\rho_-(t)) \subseteq \Omega_t\subseteq B_g(p,\rho_+(t)),
\end{equation}
where we recall $\Omega_t=K \cup u_r^{-1}[0,t]$ is the compact region bounded by $\Sigma_t$. This leads to an immediate volume comparison:
\begin{equation}
\label{vol_compare}
V(\rho_-(t)) \leq |\Omega_t| \leq V(\rho_+(t)).
\end{equation}
A corresponding area comparison holds as well, but it is not as obvious:
\begin{lemma}
\label{lemma_area}
For all $r>0$ sufficiently small, we have
$$A(\rho_-(t)) \leq |\Sigma_{t}| \leq A(\rho_+(t)).$$
\end{lemma}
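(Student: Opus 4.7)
The plan is to parameterize $\Sigma_t$ as a radial graph in geodesic polar coordinates about $p$ and apply the divergence theorem. Writing $g = ds^2 + g^{\mathrm{sph}}(s,\omega)$ in polar coordinates, let $J(s,\omega)$ denote the volume density so that $dV_g = J(s,\omega)\,ds\,d\omega$ and $A(s) = \int_{S^{n-1}} J(s,\omega)\,d\omega$. Recall $J(s,\omega) = s^{n-1}(1+O(s^2))$, so $\partial_s J > 0$ for $s$ sufficiently small.

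First I would verify that $\Sigma_t$ is a radial graph. The gradient of the leading radial term of $u_r$ in \eqref{eqn_u} has magnitude $\frac{(n-2)r^{n-2}}{(1-\lambda^{2-n})s^{n-1}}$, which is of order $r^{-1}$ on $W_r$, while by \eqref{eqn_Du} the full gradient $\nabla u_r$ differs from this purely radial vector by an error of magnitude at most $O(r)$. Hence $\partial_s u_r > 0$ throughout $W_r$ for $r$ small, and by the intermediate value theorem each radial geodesic meets $\Sigma_t$ transversally at a unique point. This gives $\Sigma_t = \{(r(\omega),\omega) : \omega \in S^{n-1}\}$ with $\rho_-(t) \leq r(\omega) \leq \rho_+(t)$.

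Next I would apply the divergence theorem to the vector field $\partial_s$ on $\Omega_t \setminus B_g(p,\rho_-(t))$. Since $\mathrm{div}(\partial_s) = \partial_s\log J$, the interior integral collapses via the fundamental theorem to $\int_{S^{n-1}}[J(r(\omega),\omega) - J(\rho_-(t),\omega)]\,d\omega$, while the inner boundary of the region contributes $-A(\rho_-(t))$. Rearranging yields the identity
\[ \int_{\Sigma_t} \langle \partial_s, \nu\rangle\,dA \;=\; \int_{S^{n-1}} J(r(\omega),\omega)\,d\omega, \]
and the right side lies in $[A(\rho_-(t)),A(\rho_+(t))]$ by the monotonicity of $J(\cdot,\omega)$ together with $\rho_-(t)\leq r(\omega)\leq\rho_+(t)$. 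The lower bound $|\Sigma_t|\geq A(\rho_-(t))$ is then immediate from $\langle \partial_s,\nu\rangle\leq 1$.

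The main obstacle is the upper bound, since $\langle \partial_s,\nu\rangle \leq 1$ goes the wrong way; equivalently, the parametric formula $|\Sigma_t| = \int_{S^{n-1}} J(r(\omega),\omega)\sqrt{1+|dr|^2_{g^{\mathrm{sph}}}}\,d\omega$ carries an extra factor above $1$. I would handle this by controlling the angle $\theta$ between $\nu = \nabla u_r/|\nabla u_r|$ and $\partial_s$. Because the radial leading part of $\nabla u_r$ has magnitude of order $r^{-1}$ while the tangential component of the error term (extractable from \eqref{eqn_Du} together with \eqref{eqn_DDu} and the boundary data $u_r|_{\partial B_g(p,r)}\equiv 0$, $u_r|_{\partial B_g(p,\lambda r)}\equiv 1$) is at most $O(r)$, one obtains $\cos\theta \geq 1 - Cr^4$. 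Combining with the identity above,
\[ |\Sigma_t| \;\leq\; \bigl(\min_{\Sigma_t}\cos\theta\bigr)^{-1}\!\!\int_{\Sigma_t}\cos\theta\,dA \;\leq\; A(\rho_+(t))\,(1+O(r^4)). \]
The resulting excess of order $r^{n+3}$ is dwarfed by $\partial_{C'} A(\rho_+(t)) = O(r^{n+1})$, so it can be absorbed into the definition of $\rho_+$ by enlarging the constant $C'$ by an $r$-independent amount, thus yielding the claimed inequality $|\Sigma_t| \leq A(\rho_+(t))$.
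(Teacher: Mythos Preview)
Your argument is correct but follows a different route from the paper's. The paper's proof rests on a convexity principle: given a foliation by convex hypersurfaces, the normal projection onto the innermost leaf is area non-increasing. The lower bound follows because small geodesic spheres are convex and $\Sigma_t$ encloses $\partial B_g(p,\rho_-(t))$. For the upper bound, the paper uses the Hessian estimate \eqref{eqn_DDu} to show that the level sets $\Sigma_t$ themselves form a convex foliation of $W_r$; the same principle, applied now to the $u_r$-foliation with $\partial B_g(p,\rho_+(t))$ sitting outside $\Sigma_t$, gives $|\Sigma_t|\leq A(\rho_+(t))$ exactly, with no error term to absorb.

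Your radial-graph and flux approach gets the lower bound just as cleanly. For the upper bound, your angle estimate $\cos\theta\geq 1-Cr^4$ is correct (most transparently from the $C^1$ part of the Schauder estimate underlying Lemma~\ref{lemma_grad_u}: $\nabla(u_r-u_0)=O(r)$ while $u_0$ is purely radial and $|\nabla u_r|\sim r^{-1}$; your citation of \eqref{eqn_Du}--\eqref{eqn_DDu} plus boundary data is a slightly roundabout way to the same conclusion). You end with $|\Sigma_t|\leq A(\rho_+(t))(1+O(r^4))$ and absorb the $O(r^{n+3})$ excess by enlarging $C'$. That step is legitimate in context, since $C'$ is only required to be large enough for \eqref{eqn_level_set} and every later use of $\rho_\pm$ (the bounds \eqref{eqn_r_minus}--\eqref{eqn_r_plus}, the ratio estimate, the integral \eqref{eqn_integral}) is insensitive to enlarging it; but strictly speaking you are proving the lemma for a possibly larger value of $C'$ than the minimal one fixing \eqref{eqn_level_set}. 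The paper's convexity argument avoids this bookkeeping and delivers the inequality on the nose.
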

This will be proved later in the section.

 Now we work on estimating $\II$. Recall we have:
\begin{align*}
-\II &= \frac{\omega_{n-1}}{(n-2)(\beta_n)^{\frac{2(n-1)}{n}}} \int_0^1 \frac{|\Omega_t|^{\frac{2(n-1)}{n}} \left(1-I(t)^{\frac{2(n-1)}{n} }\right) }{\int_{\Sigma_t} \frac{1}{|\nabla u_r|} dA} dt.
\end{align*}
We first address the isoperimetric ratio term. Let $\epsilon > 0$ be given. Applying Theorem \ref{thm_druet} to $\Omega_t \subseteq \overline{B_g}(p, \lambda r)$, we have (for $r>0$ sufficiently small and almost all $t$):
\begin{align*}
1-I(t)^{\frac{2(n-1)}{n}} &\leq (\gamma+\epsilon) |\Omega_t|^{2/n},
\end{align*}
where
$$\gamma=\frac{n^{2/n}}{(\omega_{n-1})^{2/n} } \cdot \frac{S(p)}{n(n+2)}=   \frac{S(p)}{n(n+2)\beta_n^{2/n}}.$$

\noindent \paragraph{\emph{Case 1:}} We assume $S(p) \geq 0$, so that $\gamma + \epsilon > 0$.

To continue, we need an estimate for the gradient of $u_r$ on $W_r$, provided by Lemma \ref{lemma_grad_u}:
\begin{align}
|\nabla u_r|  &= (n-2)c_n(r,\lambda r) |y|^{1-n}+ O(r) \label{grad_u_A0}
 \end{align}
(Below we will denote $c_n(r,\lambda r)$ by $c_n$ to economize on notation, keeping in mind $c_n$ is $O(r^{n-2})$.) Then using Lemma \ref{lemma_area} and \eqref{eqn_level_set},
\begin{align}
\int_{\Sigma_t} \frac{1}{|\nabla u_r|} dA &\geq |\Sigma_t| \left((n-2)c_n \rho_-(t)^{1-n} + O(r)\right)^{-1} \nonumber\\
&\geq A(\rho_-(t)) \left((n-2)^{-1}  c_n^{-1} \rho_-(t)^{n-1} + O(r^{3})\right). \label{grad_u_A}
\end{align}

Using \eqref{vol_compare} and \eqref{grad_u_A}, followed by the volume \eqref{V} and area \eqref{A} expansions for small metric balls:
\begin{align}
-\II &\leq \frac{\omega_{n-1}}{(n-2)(\beta_n)^{\frac{2(n-1)}{n}}} \int_0^1 \frac{|\Omega_t|^{2}(\gamma+\epsilon) }{\int_{\Sigma_t} \frac{1}{|\nabla u_r|} dA} dt \label{II_start}\\
     &\leq \frac{\omega_{n-1}}{(n-2)(\beta_n)^{\frac{2(n-1)}{n}}} \int_0^1 \frac{V(\rho_+(t))^{2}(\gamma+\epsilon) }{ A(\rho_-(t)) \left((n-2)^{-1}  c_n^{-1} \rho_-(t)^{n-1} + O(r^{3})\right) } dt \nonumber\\
     &\leq (\gamma+\epsilon)(\beta_n)^{\frac{2}{n}} c_n \int_0^1 \frac{\rho_+(t)^{2n}\left( 1 - \frac{S(p)}{3(n+2)}\rho_+(t)^2+ O(\rho_+(t)^{n+4})\right) }{ \rho_-(t)^{n-1}\left(1-\frac{S(p)}{6n}\rho_-(t)^2 + O(\rho_-(t)^4)\right) \left( \rho_-(t)^{n-1} + O(r^{n+1}) \right) } dt.\nonumber
\end{align}
Next, thanks to \eqref{eqn_r_minus} and \eqref{eqn_r_plus}, we can replace $\rho_{\pm }(t)$ in the error terms with $r$:
  \begin{align*}
 -\II &\leq (\gamma+\epsilon)(\beta_n)^{\frac{2}{n}} c_n \int_0^1 \frac{\rho_+(t)^{2n}\left( 1 + O(r^2)\right)}{ \rho_-(t)^{2n-2}\left(1+O(r^2)\right) \left( 1 + O(r^{2})\right) } dt.
 \end{align*}
To continue, we need to control the ratio $\rho_+(t) / \rho_-(t)$. We have
\begin{align*}
\frac{\rho_+(t)}{\rho_-(t)} &= \left(\frac{1 - t(1-\lambda^{2-n}) + C' r^2}{1 - t(1-\lambda^{2-n}) - C' r^2}\right)^{\frac{1}{n-2}}.
\end{align*}
This is maximized at $t=1$, and consequently we find:
\begin{align*}
\frac{\rho_+(t)}{\rho_-(t)} &\leq 1+O(r^2).
\end{align*}
Then we have:
\begin{align}
\II &\geq -c_n \left(\frac{S(p)}{n(n+2)}+\epsilon(\beta_n)^{\frac{2}{n}} )\right)(1+O(r^2)) \int_0^1 \rho_+(t)^{2} dt. \label{eqn_II_final}
\end{align}
The $\rho_+(t)^2$ integral is elementary to evaluate:
\begin{equation}
\label{eqn_integral}\int_0^1 \rho_+(t)^2 dt = \begin{cases}
\lambda r^2 + O(r^3),& n=3\\
\frac{2 \log(\lambda) }{1-\lambda^{-2}}r^2+O(r^3),& n=4\\
\frac{n-2}{n-4}\cdot \frac{1-\lambda^{4-n}}{1-\lambda^{2-n}}r^2 + O(r^3), &n=5.
\end{cases}
\end{equation}
In our use of Theorem \ref{thm_druet}, $\epsilon$ can be chosen to decrease to 0 as $r \to 0$, though we do not control the rate; thus, the $\epsilon$ term can be regarded as $o(1)$ in $r$.

Finally, from \eqref{eqn_I_II}, \eqref{eqn_I_lb},  \eqref{eqn_II_final} and \eqref{eqn_integral}, we have (for $n\geq 5$)
\begin{align}
\capac_g(r,\lambda r) &\geq c_n(r,\lambda r)\left(1- \frac{(n-2)S(p)}{6n(n+2)}\cdot \frac{1-\lambda^{4-n}}{1-\lambda^{2-n}} r^2 - \frac{S(p)}{n(n+2)}\cdot \frac{n-2}{n-4}\cdot \frac{1-\lambda^{4-n}}{1-\lambda^{2-n}}r^2 + o(r^2) \right) \nonumber \\
 &\geq c_n(r,\lambda r)\left(1- \frac{(n-2)S(p)}{6n(n-4)}\cdot \frac{1-\lambda^{4-n}}{1-\lambda^{2-n}} r^2 + o(r^2) \right). \label{eqn_cap_r_rho_r}
 \end{align}
The $n=3,4$ cases are very similar. This proves Proposition \ref{prop_lb} for the case $S(p)\geq 0$.

\medskip

\noindent \paragraph{\emph{Case 2:}} Now assume instead that $S(p) < 0$, i.e., $\gamma < 0$. We shrink $\epsilon > 0$ if necessary to arrange $\gamma+\epsilon < 0$. Then the above argument can be modified to still obtain \eqref{eqn_cap_r_rho_r} (and the corresponding inequalities for $n=3,4$) as follows. Beginning at \eqref{II_start}, use the bound $|\Omega_t| \geq V(\rho_-(t))$ from \eqref{vol_compare}. To estimate the $|\nabla u_r |$ term as in \eqref{grad_u_A} we instead use 
$$\int_{\Sigma_t} \frac{1}{|\nabla u_r|} dA \leq A(\rho_+(t)) \left((n-2)^{-1}  c_n^{-1} \rho_+(t)^{n-1} + O(r^{3})\right),$$
which is justified by \eqref{grad_u_A0}, \eqref{eqn_level_set}, and Lemma \ref{lemma_area}. The remainder of the argument is entirely analogous. For example, a lower bound on $\rho_-(t) / \rho_+(t)$ is $1+O(r^2)$, and the evaluation of $\int_0^1 \rho_-(t)^2 dt$ differs from $\int_0^1 \rho_+(t)^2 dt$ only by  error terms of the same order as in \eqref{eqn_integral}.

Aside from the proofs of Lemmas  \ref{lemma_grad_u} and  \ref{lemma_area}, Proposition \ref{prop_lb} follows.
\end{proof}

\begin{proof}[Proof of Lemma \ref{lemma_grad_u}]
Fix a constant $\lambda >1$. Fix a normal coordinate chart $y$ about $p$.

Let $\Omega  \subset \R^n$ be the closed annular region given by $1 \leq |x| \leq \lambda$, where $x$ denotes the standard coordinates. Let $\varphi_0$ be the harmonic function with respect to the Euclidean metric on $\Omega$ that vanishes on the inner boundary and equals one on the outer boundary. Explicitly,
$$\varphi_0(x) = \frac{1}{1-\lambda^{2-n}} \left(1- \frac{1}{|x|^{n-2}}\right).$$

Let $\Phi_r: \Omega \to W_r$ be the diffeomorphism given in the chart $y$ by $\Phi_r(x) = r x$.

Let $u_r$ be $g$-harmonic on $W_r$, vanishing on the inner boundary at radius $r$ and equaling 1 on the outer boundary at radius $\lambda r$. We ``blow up'' $u_r$ by defining the function $\varphi_r$ on $\Omega$ by:
$$\varphi_r(x) = u_r \circ \Phi_r(x).$$
Note that $\varphi_r$ is harmonic with respect to the following Riemannian metric on $\Omega$: $$\hat g_r = r^{-2} \Phi_r^* g.$$
We apply the global Schauder inequality \cite[Theorem 6.6]{GT}  on $\Omega$ to $\varphi_r -\varphi_0$ and the elliptic operator $\Delta_{\hat g_r}$. As is well known, a maximum principle argument and the fact that $\varphi_r - \varphi_0$ vanishes on $\partial \Omega$ allows one to drop the $C^0$ norm of $\varphi_r -\varphi_0$ on the right-hand side of the inequality. Specifically, we have:
$$\|\varphi_r - \varphi_0\|_{C^{2,\alpha}(\Omega)} \leq C \|\Delta_{\hat g_r} (\varphi_r - \varphi_0)\|_{C^{0,\alpha}(\Omega)}=C \|\Delta_{\hat g_r} \varphi_0\|_{C^{0,\alpha}(\Omega)},$$
where the H\"older norms are taken in the coordinate chart $x$. Since the family $\hat g_r$ converges uniformly to the Euclidean metric on $\Omega$ as $r \to 0$, the constant $C$ may be taken independent of $r$. From scaling and diffeomorphism invariance, 
$$\|\Delta_{\hat g_r} \varphi_0\|_{C^{0,\alpha}(\Omega)} =  r^2 \| \Phi_r^* (\Delta_{g} u_0)\|_{C^{0,\alpha}(\Omega)},$$ 
where $u_0 = \varphi_0 \circ \Phi_r^{-1}$, i.e. 
$$u_0(y) = \frac{1}{1-\lambda^{2-n}} \left(1- \frac{r^{n-2}}{|y|^{n-2}}\right).$$
The $C^0$ part of the H\" older norm is independent of the diffeomorphism, but an $r^{\alpha}$ factor appears in semi-norm part. That is,
\begin{equation}
\label{eqn_seminorm}
\|\Phi_r^* \Delta_g u_0\|_{C^{0,\alpha}(\Omega)} =  \|\Delta_{g} u_0\|_{C^{0}(W_r)} + r^{\alpha} \sup_{y \neq y'\\ \text{ in } W_r} \frac{|\Delta_g u_0(y) - \Delta_g u_0(y')|}{|y-y'|^\alpha}
\end{equation}

Using the fact that $y$ represents normal coordinates, it is straightforward to check that $\Delta_{g} u_0$ is $r^{n-2}$ times a function $Q(y)$ that is smooth away from 
$p$ and is independent of the parameter $r$, where $Q(y) = O(|y|^{2-n})$ 
and $\frac{\partial Q}{\partial y^i} = O(|y|^{1-n})$, etc. ($Q(y)$ depends on the metric coefficients, the function $|y|^{2-n}$, and their derivatives.)
Specifically, the semi-norm part of $\|\Delta_{g} u_0\|_{C^{0,\alpha}(\Omega)}$ is $O(|y|^{2-n-\alpha})$. Then from \eqref{eqn_seminorm},
$$\|\Phi_r^* \Delta_g u_0\|_{C^{0,\alpha}(\Omega)} = O(1),$$
since $|y| = O(r)$ on $W_r$.
Putting the last few equations together,
$$\|\varphi_r - \varphi_0\|_{C^{2,\alpha}(\Omega)} \leq C r^{2},$$
for a possibly different constant $C$ that is independent of $r$,
i.e.
\begin{equation}
\label{u_r_u_0}
\| \Phi_r^* (u_r - u_0)\|_{C^{2,\alpha}(\Omega)} \leq C r^{2}.
\end{equation}

We look at the consequences of \eqref{u_r_u_0}. First we have a $C^0$ estimate on $u_r$:
\begin{align*}
u_r(y) &= \frac{1}{1-\lambda^{2-n}} \left(1-  \frac{r^{n-2}}{|y|^{n-2}}\right) + O(r^{2})
\end{align*}
which proves \eqref{eqn_u}.

From \eqref{u_r_u_0}, the gradient estimate follows
$$\sup_{y \in W_r} |\nabla u_r(y) - \nabla u_0(y)| \leq C r,$$
where the gradient and norm are with respect to $g$. Using the fact that $y$ is a normal coordinate system, we can relate the $g$-gradient norm of $u_0$ back to its norm in the coordinate chart, obtaining \eqref{eqn_Du}:
$$|\nabla u_r| =  \frac{(n-2) r^{n-2}}{1-\lambda^{2-n}} \frac{1}{|y|^{n-1}} +O(r).$$

Finally, from \eqref{u_r_u_0}, we have
$$\frac{\partial^2 u_r}{\partial y^i\partial y^j} = \frac{\partial^2 u_0}{\partial y^i\partial y^j} + O(1).$$
Computing the first term on the right-hand side explicitly and using the fact that $y$ is a normal coordinate system (so that the Christoffel symbols are $O(|y|)$, it follows
$$\Hess(u_r)_{ij} =  \frac{(n-2) r^{n-2}}{1-\lambda^{2-n}} \left( |y|^{-n} \delta_{ij} - \frac{n y^i y^j}{|y|^{n+2}}\right) + O(1),$$
where the Hessian is taken with respect to $g$. This gives \eqref{eqn_DDu}.
\end{proof}

\begin{proof}[Proof of Lemma \ref{lemma_area}]
We begin with the following general fact: let $\{N_s\}_{a \leq s \leq b}$ be a smooth foliation of a region $W$ in a Riemannian manifold, where the $N_s$ are compact hypersurfaces that are convex in the direction of increasing $s$. Then any hypersurface in $W$ (not necessarily a leaf of the foliation) homologous to $N_a$ within $W$ has area at least as large as $|N_a|$. This can be seen by noting the map that collapses $W$ onto $N_a$ along normals of the foliation is distance non-increasing and hence area non-increasing.

Next, we recall the setup and notation in Lemma \ref{lemma_area}. We have the harmonic function $u_r$ on the annular region $W_r$ about $p$, whose level sets are denoted $\Sigma_t$ for $0 \leq t \leq 1$. With respect to normal coordinates $y$ about $p$, each point in $\Sigma_t$ has distance at least $\rho_-(t)$ and at most $\rho_+(t)$ from $p$.

Note all sufficiently small metric spheres about $p$ are smooth and convex and foliate a punctured neighborhood of $p$. Since $\Sigma_t$ encloses $\partial B_g(p,\rho_-(t))$ by \eqref{eqn_balls}, it follows from the above fact that (if $r>0$ is sufficiently small)
$$A(\rho_-(t)) \leq |\Sigma_t|,$$
which proves the first part of the lemma. 

We will also need to apply the above fact to the foliation of $W_r$ by the level sets of $u_r$. Note from \eqref{eqn_Du} in Lemma \ref{lemma_grad_u}, for $r$ sufficiently small, $|\nabla u_r|$ is nonzero on $W_r$; in particular, all level sets $\Sigma_{t}$ are smooth hypersurfaces that foliate $W_r$. We claim that for $r>0$ sufficiently small, these level sets all are convex.

Letting $B_t$ denote the (scalar-valued) second fundamental form of $\Sigma_t$ in the (outward-pointing) unit normal direction $\frac{\nabla u_r}{|\nabla u_r|}$, we have
$$B_t(\cdot, \cdot) = \frac{1}{|\nabla u_r|} \Hess (u_r)(\cdot, \cdot).$$
Direct computation using Lemma \ref{lemma_grad_u} then shows that $B_t$ is positive definite on directions tangential to $\Sigma_t$ for $r>0$ small; i.e., the $\Sigma_t$ are convex.

We now consider two cases to complete the proof.

Case 1: $\rho_+(t) \geq \lambda r$. Since $A(\cdot)$ is increasing for $r$ sufficiently small by \eqref{A}, we have $A(\rho_+(t)) \geq A(\lambda r)$. Next, since $\partial B_g(p,\lambda r)$ is the $t=1$ level set of $u_r$ and $\Sigma_t$ is a $t\leq 1$ level set, and these level sets are convex, we have $|\Sigma_t| \leq A(\lambda r)$, which completes the proof in this case.

Case 2: $\rho_+(t) < \lambda r$. Then $\partial B_g(p,\rho_+(t))$ lies in the region foliated by the level sets of $u$ between the values $t$ and $1$. Then by the fact at the beginning of the proof,
\begin{equation*}
|\Sigma_t| \leq |\partial B_g(p,\rho_+(t))| = A(\rho_+(t)).\qedhere
\end{equation*}
\end{proof}

Finally, Theorem \ref{thm_main} follows immediately from Propositions \ref{prop_ub} and \ref{prop_lb}.

\begin{remark}
In the case in which $(M,g)$ has constant scalar curvature and ``strong bounded geometry,'' it should be possible using the small-volume isoperimetric profile expansion in \cite{Nar} (in place of Theorem \ref{thm_druet}) to improve the $o(r^2)$ error estimate in Proposition \ref{prop_lb} and hence in Theorem \ref{thm_main} to $O(r^4)$. The constant (zero) scalar curvature case is relevant to Conjecture \ref{conj_cap}.
\end{remark}

\begin{bibdiv}
 \begin{biblist}
 
 \bib{Bray_RPI}{article}{
   author={Bray, H.},
   title={Proof of the Riemannian Penrose inequality using the positive mass
   theorem},
   journal={J. Differential Geom.},
   volume={59},
   date={2001},
   number={2},
   pages={177--267}
}

 \bib{BKKS}{article}{
   author={Bray, H.},
   author={Kazaras, D.},
   author={Khuri, M.},
   author={Stern, D.},
   title={Harmonic Functions and The Mass of 3-Dimensional Asymptotically Flat Riemannian Manifolds},
   eprint={https://arxiv.org/abs/1911.06754}
}

 \bib{BM}{article}{
   author={Bray, H.},
   author={Miao, P.},
   title={On the capacity of surfaces in manifolds with nonnegative scalar
   curvature},
   journal={Invent. Math.},
   volume={172},
   date={2008},
   number={3},
   pages={459--475}
}

\bib{Bur}{article}{
   author={Burkhardt-Guim, P.},
   title={Pointwise lower scalar curvature bounds for $C^0$ metrics via
   regularizing Ricci flow},
   journal={Geom. Funct. Anal.},
   volume={29},
   date={2019},
   number={6},
   pages={1703--1772}
}

\bib{Dru}{article}{
   author={Druet, O.},
   title={Sharp local isoperimetric inequalities involving the scalar
   curvature},
   journal={Proc. Amer. Math. Soc.},
   volume={130},
   date={2002},
   number={8},
   pages={2351--2361}
}

\bib{FST}{article}{
   author={Fan, X.-Q.},
   author={Shi, Y.},
   author={Tam, L.-F.},
   title={Large-sphere and small-sphere limits of the Brown-York mass},
   journal={Comm. Anal. Geom.},
   volume={17},
   date={2009},
   number={1},
   pages={37--72},
}

\bib{SY}{article}{
   author={Schoen, R.},
   author={Yau, S.-T.},
   title={Conformally flat manifolds, Kleinian groups and scalar curvature},
   journal={Invent. Math.},
   volume={92},
   date={1988},
   number={1},
   pages={47--71}
}

\bib{FS}{article}{
   author={Freire, A.},
   author={Schwartz, F.},
   title={Mass-capacity inequalities for conformally flat manifolds with
   boundary},
   journal={Comm. Partial Differential Equations},
   volume={39},
   date={2014},
   number={1},
   pages={98--119}
 }

\bib{GT}{book}{
   author={Gilbarg, D.},
   author={Trudinger, N.},
   title={Elliptic partial differential equations of second order},
   series={Classics in Mathematics},
   publisher={Springer-Verlag, Berlin},
   date={2001}
}

\bib{GV}{article}{
   author={Gray, A.},
   author={Vanhecke, L.},
   title={Riemannian geometry as determined by the volumes of small geodesic
   balls},
   journal={Acta Math.},
   volume={142},
   date={1979},
   number={3-4},
   pages={157--198}
}

\bib{Gro}{article}{
   author={Gromov, M.},
   title={Dirac and Plateau billiards in domains with corners},
   journal={Cent. Eur. J. Math.},
   volume={12},
   date={2014},
   number={8},
   pages={1109--1156}
}

\bib{Hui}{article}{
     author={Huisken, G.},
     title={An isoperimetric concept for mass and quasilocal mass},
     journal={Oberwolfach Reports, European Mathematical Society (EMS), Z\"urich},
   date={2006},
   volume={3},
   number={1},
   pages={87--88}
}

\bib{Jau}{article}{
   author={Jauregui, J.},
   title={ADM mass and the capacity-volume deficit at infinity},
   eprint={https://arxiv.org/abs/2002.08941}
}

\bib{Lee}{article}{
   author={Lee, D.},
   title={A positive mass theorem for Lipschitz metrics with small singular
   sets},
   journal={Proc. Amer. Math. Soc.},
   volume={141},
   date={2013},
   number={11},
   pages={3997--4004}
}

\bib{CM}{article}{
   author={Li, C.},
   author={Mantoulidis, C.},
   title={Positive scalar curvature with skeleton singularities},
   journal={Math. Ann.},
   volume={374},
   date={2019},
   number={1-2},
   pages={99--131}
}

\bib{MMT}{article}{
   author={Mantoulidis, C.},
   author={Miao, P.},
   author={Tam, L.-F.},
   title={Capacity, quasi-local mass, and singular fill-ins},  
   journal={J. Reine Angew. Math. (to appear)}
}

\bib{Nar}{article}{
   author={Nardulli, S.},
   author={Osorio Acevedo, L. E.},
   title={Sharp Isoperimetric Inequalities for Small Volumes in Complete
   Noncompact Riemannian Manifolds of Bounded Geometry Involving the Scalar
   Curvature},
   journal={Int. Math. Res. Not. IMRN},
   date={2020},
   number={15},
   pages={4667--4720}
}

\bib{PS}{book}{
   author={P{\'o}lya, G.},
   author={Szeg{\"o}, G.},
   title={Isoperimetric Inequalities in Mathematical Physics},
   series={Annals of Mathematics Studies, no. 27},
   publisher={Princeton University Press},
   place={Princeton, N. J.},
   date={1951}
}

\bib{Schw}{article}{
   author={Schwartz, F.},
   title={A volumetric Penrose inequality for conformally flat manifolds},
   journal={Ann. Henri Poincar\'{e}},
   volume={12},
   date={2011},
   number={1},
   pages={67--76}
}

\bib{Ste}{article}{
   author={Stern, D.},
   title={Scalar curvature and harmonic maps to $S^1$},
   eprint={https://arxiv.org/abs/1908.09754}
}

\end{biblist}
\end{bibdiv}

\end{document}